\title{Resolution of an integral equation with the Thue-Morse sequence}
\author{
               Jean-Fran\c{c}ois Bertazzon \\ 
               \small{ Laboratoire d’Analyse, Topologie et Probabilités,} \\
               \small{Aix-Marseille Université,} \\
               \small{Avenue de l'escadrille Normandie-Niémen. 13397  Marseille, France} 
}
\renewcommand\section{\@startsection{section}{1}{\z@}{6pt\@plus0pt}{6pt}{\bfseries\scshape\centering}}
\renewcommand\subsection{\@startsection {subsection}{2}{\z@}{6pt \@plus 0ex  \@minus 0ex}{-6pt \@plus 0pt}{\reset@font\bfseries \ \ \ \ \ \ \ \ }}
\renewenvironment{thebibliography}[1]
     {\section*{\refname}%
      \@mkboth{\MakeUppercase\refname}{\MakeUppercase\refname}%
      \list{\@biblabel{\@arabic\c@enumiv}}%
           {\settowidth\labelwidth{\@biblabel{#1}}%
            \leftmargin\labelwidth
            \advance\leftmargin\labelsep
            \@openbib@code
            \usecounter{enumiv}%
            \let\p@enumiv\@empty
            \itemsep=0pt
            \parsep=0pt
            \leftmargin=\parindent
            \itemindent=-\parindent
            \renewcommand\theenumiv{\@arabic\c@enumiv}}%
      \sloppy
      \clubpenalty4000
      \@clubpenalty \clubpenalty
      \widowpenalty4000%
      \sfcode`\.\@m}
     {\def\@noitemerr
       {\@latex@warning{Empty `thebibliography' environment}}%
      \endlist}
\begin{document}

\newcommand{\floor}[1]{{\left\lfloor #1 \right\rfloor}}

\newtheorem{lemma}{Lemma}
\newtheorem{theorem}{Theorem}
\maketitle

\begin{abstract}
It is a classical fact that the exponential function is solution of the integral equation $ \int _0^X   f(x)dx + f(0) =f(X)$. If we slightly modify this equation to  $ \int _0^X   f(x)dx+f(0)=f(\alpha X)$ with $\alpha\in ]0,1[$, it seems that no classical techniques apply to yields solutions. In this article, we consider the parameter $\alpha=1/2$. We will show the existence of a solution wich takes the values of the Thue-Morse sequence on the odd integers.
\end{abstract}

\section{Introduction}

We consider the functional equation  
\begin{equation} \label{eq:0}  \int \limits_{0} ^{X}  \ f(x)dx \ +\  f(0) \ = \ f \left( \frac{X}{2} \right) . \end{equation}

We can see that the set of continuous solutions is a closed vector space,  containing the identically zero function. It is quite clear that any continuous function satisfying Equation (\ref{eq:0}) is differentiable infinitely many times. So, Equation (\ref{eq:0}) can be rewritten $f(X) = f' \left( X/2 \right)/2$.

We can easily verify that the nonzero solutions cannot be expanded in a series. In addition, two solutions equal in a neighborhood of $0$ are equal everywhere. 

We let $\tau$ denote the Thue-Morse substitution. It is a morphism of the free monoid generated by $-1$ and $1$,  defined by $\tau (-1) = (-1) 1$ and $\tau (1) = 1 (-1)$ and let $\boldsymbol{u} = (u_n) _ {n \geq 0} = (-1) 11 (-1) 1 (-1) (-1) 1 \dots $  be the Thue-Morse sequence, one of the fixed points of this substitution. See \cite{allouche95b,Berstel92axelthue's,N2} for details.

The aim of this work is to show the following result:

\begin{theorem} \label{th}
There exists a continuous function $f_\infty$ valued in $[-1,1]$, solution of Equation (\ref{eq:0}), such that 

$\bullet$ \hspace{6pt} for each integer $n$, $f_\infty(2n+1)=u_n$ and $f_\infty(2n)=0$;

$\bullet$ \hspace{6pt} for each negative real number $x$, $f_\infty(x)=0$;

$\bullet$ \hspace{6pt} for each positive real number $x$, $|f_\infty(x)|=|f_\infty(x+2)|$.
\end{theorem}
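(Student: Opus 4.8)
The plan is to force the normalization $f_\infty(0)=0$, reduce the whole problem to a contraction fixed point on a function space over $[0,1]$, and then propagate that local solution across the positive axis by a self-similar rule twisted by the Thue--Morse signs.

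\textbf{Reformulation.} Since the pointwise conditions require $f_\infty(0)=0$, I first rewrite Equation~(\ref{eq:0}): replacing $X$ by $2X$ and using $f(0)=0$ turns it into the fixed-point form
\[
f(X)=\int_0^{2X} f(u)\,du,\qquad X\ge 0,
\]
which is equivalent to~(\ref{eq:0}) for a function vanishing on $(-\infty,0]$. (The negative axis is then handled trivially, and the differentiated form $f'(X)=2f(2X)$ shows every derivative vanishes at $0$, matching the stated non-analyticity.) I set $f_\infty=0$ on $(-\infty,0]$ and build $f_\infty$ on $[0,\infty)$ out of its restriction $\phi:=f_\infty|_{[0,2]}$.

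\textbf{Self-similar extension and the reduced operator.} Guided by $u_{2n}=u_n$ and $u_{2n+1}=-u_n$, I define the global function by the block rule $f_\infty(x)=-u_k\,\phi(x-2k)$ for $x\in[2k,2k+2)$; then $|f_\infty(x)|=|f_\infty(x+2)|$ is immediate, and continuity at even integers will follow from $\phi(0)=\phi(2)=0$. Writing $g:=\phi|_{[0,1]}$ and imposing the equation on $[0,2]$ collapses everything to the single fixed-point equation $g=Sg$ on $C[0,1]$, where
\[
(Sg)(X)=
\begin{cases}
\displaystyle\int_0^{2X} g, & 0\le X\le\tfrac12,\\[4pt]
\displaystyle\int_0^1 g-(2X-1)-\int_0^{2X-1} g, & \tfrac12\le X\le 1.
\end{cases}
\]
The second line encodes the relation $\phi(1+t)=-1-\phi(t)$ that the equation produces on $[1,2]$. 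One checks that $Sg$ is continuous at $X=\tfrac12$ and that $g(0)=0$, $g(1)=-1$ hold automatically for every fixed point, which will give $f_\infty(2n+1)=u_n$ and $f_\infty(2n)=0$.

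\textbf{Contraction and the range bound.} The map $S$ is affine; its linear part $S_0$ obeys $\|S_0\|_\infty\le 1$, but more is true: for $\|h\|_\infty\le 1$ one has $|S_0h|\le S_0\mathbf 1$ pointwise, where $S_0\mathbf 1$ is the tent function of height $1$. Iterating, $|S_0^2h|\le S_0^2\mathbf 1\le\tfrac12$, so $S_0^2$ is a $\tfrac12$-contraction. To control the range I work on the closed convex set $\{g\in C[0,1]:-1\le g\le 0\}$, which $S$ preserves by a short sign estimate on each of its two pieces; this simultaneously yields $\|\phi\|_\infty\le 1$ and hence $|f_\infty|\le 1$. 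A standard Banach fixed-point argument applied to $S^2$ then produces the unique $g$, and with it $\phi$ and $f_\infty$.

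\textbf{Global verification and the main obstacle.} It remains to confirm that the reassembled $f_\infty$ solves $f(X)=\int_0^{2X}f$ for \emph{all} $X\ge0$, not merely on $[0,2]$. Splitting $[0,2X]$ into complete blocks $[2j,2j+2]$ plus a remainder, each block integral equals $u_j$, and the partial sums $\sum_{j=0}^{2k-1}u_j=0$ (from $u_{2i}+u_{2i+1}=0$) make the full blocks telescope away, while the remainder is treated by the same two relations used to build $S$. I expect the main obstacle to be twofold: establishing the contraction together with the invariant region $\{-1\le g\le 0\}$ that delivers the sharp $[-1,1]$ bound, and the Thue--Morse bookkeeping (the cancellation $\sum_{j=0}^{2k-1}u_j=0$ and the sign identities) needed to upgrade the equation from local to global.
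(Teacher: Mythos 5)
Your proposal is correct, but it takes a genuinely different route from the paper. The paper never mentions a fixed-point operator: it builds explicit piecewise-linear approximants $f_n$ by renormalizing the columns $\Sigma^k_n$ of a ``Pascal triangle'' seeded with the Thue--Morse sequence, proves combinatorial lemmas giving the uniform bound $0\le a(n,l)\le 2^{(n-1)(n-2)/2}$ together with monotonicity of the sequences $(f_n(x))_n$, obtains $f_\infty$ as a monotone limit, and verifies the integral equation by passing to the limit in a Riemann-sum identity. You instead impose the self-similar block ansatz $f_\infty(x)=-u_k\,\phi(x-2k)$ from the outset (which in the paper is the content of Point~3 of Lemma~\ref{le:3}, derived rather than assumed), reduce to the affine operator $S$ on $C[0,1]$, and win by showing $S^2$ is a $\tfrac12$-contraction preserving the order interval $\{-1\le g\le 0\}$. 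I checked the key computations: the second branch of $S$ correctly encodes $\phi(1+t)=-1-\phi(t)$ (the identity the paper uses in Point~7), the tent-function estimate $|S_0^2h|\le\|h\|_\infty S_0^2\mathbf 1\le\tfrac12\|h\|_\infty$ is right, the invariance of $\{-1\le g\le 0\}$ follows from the sign estimates on both branches, and the local-to-global upgrade works because $\int_0^2\phi=-1$, each block contributes $u_j$, and $[0,4k]$ always contains an even number of blocks so that $\sum_{j=0}^{2k-1}u_j=0$; the leftover block and partial block are absorbed exactly as you say. What your approach buys is brevity, complete avoidance of the combinatorial estimates, and uniqueness of the solution within the block-structured class; what the paper's approach buys is the explicit combinatorial description of $f_\infty$ (its values $a(n,l)2^{-(n-1)(n-2)/2}$ at dyadic points, the Lipschitz constant, and the link to Prunescu's two-dimensional recurrent sequences). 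The one place where your sketch leaves genuine work is the global verification paragraph --- the case $X\in[2k+1,2k+2)$ requires combining the block rule for $f$ on $[4k+2,4k+4)$ with the already-established equation on $[0,1]$ --- but the computation goes through as you outline it.
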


\begin{figure}[H] \centering \includegraphics[width=10cm]{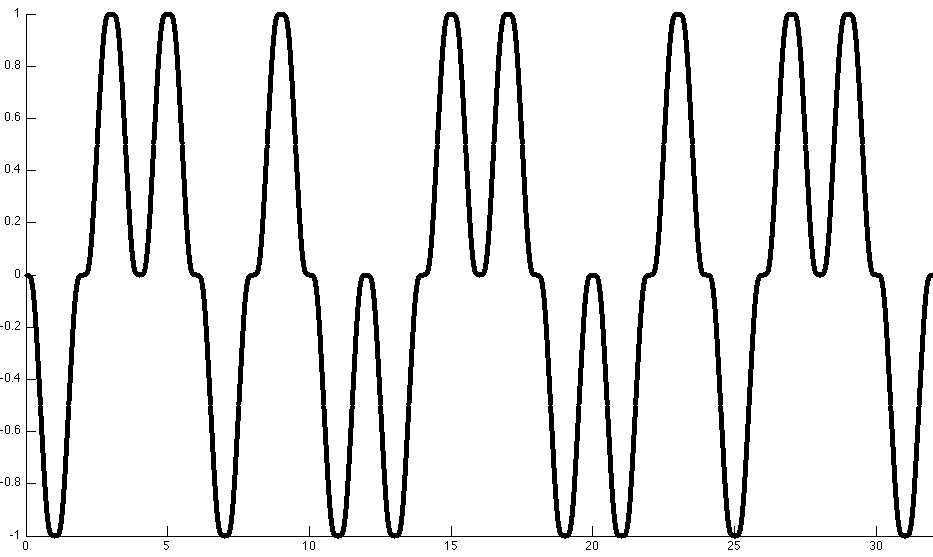} \caption{Representation of the graph of $f_\infty$.} \end{figure}

\section{Introduction of some combinatorial objects} \label{se:introcomb}

For any integers $k\geq0$ and $n\geq1$, we define the quantities $(\Sigma ^ k_n)_{(k,n) \in \mathbb N^2}$  by 
\begin{equation} \label{eq:tes} \Sigma ^k_0 = u_{k} \mbox{ and } \Sigma ^0_n=0, \end{equation}
and by induction for any integers $k\geq0$ and $n\geq0$, by 
\begin{equation} \label{eq:te} \Sigma^{k+1}_{n+1} = \Sigma^{k}_{n}+\Sigma^{k}_{n+1}.  \end{equation}

\begin{figure}[H]
\centering
\begin{tikzpicture}[scale=0.65]
\node [black] (B0) at (-1,-0.5) {$-1$};   \node [black] (B1) at (2,-0.5) {$0$};  \node [black] (B2) at (4,-0.5) {$0$};  \node [black] (B2bis) at (6,-0.5) {$0$};  \node [black] (BB2) at (7,-0.5) {};  \node [black] (B3) at (9,-0.5) {}; 
\node [black] (B4) at (10,-0.5) {$0$};  \node [black] (B5) at (12,-0.5) {$0$};  \node [black] (BB5) at (13,-0.5) {};  \node [black] (B6) at (15,-0.5) {};
\node [black] (C0) at (-1,-1) {$1$};   \node [black] (C1) at (2,-1) {$-1$};  \node [black] (C2) at (4,-1) {$0$};  \node [black] (C2bis) at (6,-1) {$0$};  \node [black] (CC2) at (7,-1) {};  \node [black] (C3) at (9,-1) {}; 
\node [black] (C4) at (10,-1) {$0$};  \node [black] (C5) at (12,-1) {$0$};  \node [black] (CC5) at (13,-1) {};  \node [black] (C6) at (15,-1) {};
\node [black] (D0) at (-1,-1.5) {$1$};   \node [black] (D1) at (2,-1.5) {$0$};  \node [black] (D2) at (4,-1.5) {$-1$};  \node [black] (D2bis) at (6,-1.5) {$0$};  \node [black] (DD2) at (7,-1.5) {};  \node [black] (D3) at (9,-1.5) {}; 
\node [black] (D4) at (10,-1.5) {$0$};  \node [black] (D5) at (12,-1.5) {$0$};  \node [black] (DD5) at (13,-1.5) {};  \node [black] (D6) at (15,-1.5) {};
\node [black] (E0) at (-1,-2) {$-1$};   \node [black] (E1) at (2,-2) {$1$};  \node [black] (E2) at (4,-2) {$-1$}; \node [black] (E2bis) at (6,-2) {$-1$};  \node [black] (EE2) at (7,-2) {}; 
\node [black] (E3) at (9,-2) {};  \node [black] (E4) at (10,-2) {$0$};  \node [black] (E5) at (12,-2) {$0$};  \node [black] (EE5) at (13,-2) {};  \node [black] (E6) at (15,-2) {};
\node [black] (E10) at (-1,-2.5) {$1$};   \node [black] (E11) at (2,-2.5) {$0$};  \node [black] (E12) at (4,-2.5) {$0$};  \node [black] (E12) at (6,-2.5) {$-2$};  \node [black] (E1E2) at (7,-2.5) {}; 
\node [black] (E13) at (9,-2.5) {};  \node [black] (E14) at (10,-2.5) {$0$};  \node [black] (E15) at (12,-2.5) {$0$};  \node [black] (E1E5) at (13,-2.5) {};  \node [black] (E16) at (15,-2.5) {};
\node [black] (FG0) at (-2.73,-4) {$\Sigma^k_0=$};  \node [black] (F0) at (-1,-4) {$u_k$};   \node [black] (FF0) at (2,-4) {$\Sigma^k_1$};   \node [black] (FF0) at (4,-4) {$\Sigma^k_2$}; 
  \node [black] (F2) at (5,-4) {};   \node [black] (F3) at (9,-4) {};  
\node [black] (F4) at (10,-4) {$\Sigma^k_n$};  \node [black] (F5) at (12,-4) {$\Sigma^k_{n+1}$};  \node [black] (FF5) at (13,-4) {}; \node [black] (F6) at (15,-4) {};
\node [black] (GF0) at (-3,-5) {$\Sigma^{k+1}_0=$};  \node [black] (G0) at (-1,-5) {$u_{k+1}$};   \node [black] (G0) at (2,-5) {$\Sigma^{k+1}_1$};  
 \node [black] (G0) at (4,-5) {$\Sigma^{k+1}_2$};   \node [black] (G1) at (5,-5) {};  
\node [black] (G4) at (11,-5) {};   \node [black] (G5) at (12,-5) {$\Sigma^{k+1}_{n+1}$};  \node [black] (GG5) at (13,-5) {}; \node [black] (G6) at (15,-5) {};
\draw[black,densely dashed] (BB2) --    (B3); \draw[black,densely dashed] (CC2) --    (C3); \draw[black,densely dashed] (DD2) --    (D3); \draw[black,densely dashed] (EE2) --    (E3);
\draw[black,densely dashed] (E1E2) --    (E13); \draw[black,densely dashed] (BB5) --    (B6); \draw[black,densely dashed] (CC5) --    (C6); \draw[black,densely dashed] (DD5) --    (D6);
\draw[black,densely dashed] (EE5) --    (E6); \draw[black,densely dashed] (E1E5) --    (E16); \draw[black,densely dashed] (F2) --    (F3); \draw[black,densely dashed] (FF5) --    (F6);
\draw[black,densely dashed] (G1) --    (G4); \draw[black,densely dashed] (GG5) --    (G6); \draw[black,densely dashed] (E10) --    (F0); \draw[black,densely dashed] (E14) --    (F4); \draw[black,densely dashed] (E15) --    (F5);
\end{tikzpicture}
\caption{"Pascal's Triangle"  associated to the Thue-Morse sequence.}
\end{figure}

In \cite{prunescu}, M. Prunescu has studied the behavior of  certain double sequences, called \textit{recurrent two-dimensional sequences} in a more general context. For example when the initialization  of the induction given in Equation (\ref{eq:tes}) is 
\[
\Sigma ^k_0 = v_{k} \mbox{ and } \Sigma ^0_n=w_n,
\]
where $(v_n)_n$ and $(w_n)_n$ are  sequences such that $v_0=w_0$. He is particularly interested in the case where $\boldsymbol{v}=\boldsymbol{w}=\boldsymbol{u}$.
\bigskip

If we cleverly renormalize the lines of the standard Pascal triangle, we can approximate a Gaussian curve. We will renormalize the columns of the Pascal triangle  associated to the Thue-Morse sequence, to approximate the function $f_\infty$. We will see that each column is uniformly bounded. This is a very special property of the Thue-Morse sequence. 

This property does not hold for Sturmian words, for which the sequence $(\Sigma ^k_2)_k$ is not bounded.  More precisely, for each parameter $\alpha\in [0,1]$, we put $\boldsymbol{v}(\alpha)=\big{(}v_n(\alpha)\big{)}_n$ the sequence defined for each integer $n$ by $v_n(\alpha)=\floor{(n+1)\alpha}-\floor{n\alpha}$. We associate to the sequence  $\boldsymbol{v}(\alpha)$ the sequence $\boldsymbol{w}(\alpha)=\big{(}w_n(\alpha)\big{)}_n$ defined for each integer $n$ by $w_n(\alpha) = \alpha$ if $v_n(\alpha)=0$, and $w_n(\alpha) = -(1-\alpha)$ otherwise. So, the sequence $(∑^k_1)_1$ defined in (\ref{eq:te}) associated to the sequence $\boldsymbol{w}(\alpha)$ is bounded. But the sequence $(\Sigma ^k_2)_k$ is not bounded. We refer to \cite{MR2040589}, \cite{MR0113864} and \cite{MR1743497}. 
\bigskip

For all integers $n$,  we define a real function $f_{n}$, by $f_n(x)=0$ if $x\leq 0$, and
\[
f_n(x)= x^k_n +2^{n-1}\delta_x (x^{k+1}_n-x^k_n), \mbox{ if } x= \frac{k}{2^{n-1}} +\delta_x \mbox{ and } 0\leq \delta_x< 2^{1-n}
\]
for an integer $k$, with the notation $x^k_n \ = \  2^{-(n-1)(n-2)/2} \cdot \Sigma ^k_n $.

\begin{figure}[H] \centering \includegraphics[width=14cm]{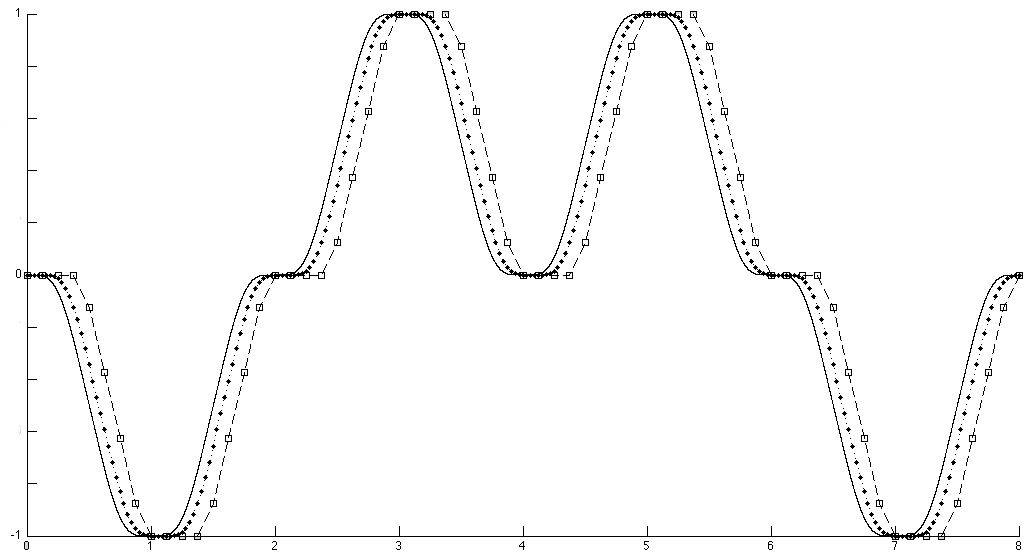} \caption{Representation of graph  of $f_4$, $f_6$ and $f_\infty$.} \end{figure}

We may also approach this problem from a dynamical point of view. We define $T$, the application from the set of real sequences into itself by
\[
T\Big{(}  (y_n)_{n\geq 1} \Big{)}  =  \Big{(}  y_1 ,  y_2+y_1 ,  y_3+\frac{y_2}{2^{1}}  ,   \dots  ,  y_{n+1}+\frac{y_n}{2^{n-1}}  ,  \dots  \Big{)}.
\]
We must then consider  the $n$-th coordinates of the sequence $(\boldsymbol{y}^k)_{k\geq0}$ up to renormalization,  where  $\boldsymbol{y}^k=(y^k_n)_{n\geq1}$, is defined by induction by
$\boldsymbol{y}^0 = \boldsymbol{0}=(0,\dots,0,\dots)$, and for each integer $k\geq 1$,
\[
 \boldsymbol{y}^{k+1}=T(\boldsymbol{y}^k)-(u_{k},0,\dots,0,\dots). 
\]

\section{Calculation of points $x^k_n$  for the first $n$}
\label{se:cal}

We calculate the initial values of sequences $(x^k_n)_{k}$. To do this, we note that for each integer $k$, $u_{2k} = u_{k} =-u_{2k+1}$. 

\[
\left\{
\begin{array}{cllllll} \Sigma ^{2k} _{1} &=&  0, \\
\Sigma ^{2k+1} _{1} &=&  u_{2k}=u_k, \\
\end{array}
\right.
\Longleftrightarrow
\left\{
\begin{array}{cllllll}
x ^{2k} _{1} &=&  0,\\
x^{2k+1} _{1} &=&u_k. \\
 \end{array}
\right.
\]
\[
\left\{
\begin{array}{clllll}
\Sigma ^{4k} _{2} &=&  0,&& \\
\Sigma ^{4k+1} _{2} &=&  \Sigma ^{4k} _{2}+\Sigma ^{4k}_1& =& 0, \\
\Sigma ^{4k+2} _{2} &=&  \Sigma ^{4k+1} _{2}+\Sigma ^{4k+1}_1 &=& u_{2k}=u_k, \\
\Sigma ^{4k+3} _{2} &=&  \Sigma ^{4k+2} _{2}+\Sigma ^{4k+2}_1 &=& u_{2k}=u_k, \\
\end{array}
\right.
\Longleftrightarrow
\left\{
\begin{array}{cllllll}
x ^{4k} _{2} &=&  0,\\
x^{4k+1} _{2} &=&0, \\
x ^{4k+2} _{2} &=& u_{k},\\
x^{4k+3} _{2} &=&u_{k}. \\
 \end{array}
\right.
\]
\[
\left\{
\begin{array}{clllll}
\Sigma ^{8k} _{3} &=&   0, &&\\
\Sigma ^{8k+1} _{3} &=&  \Sigma ^{8k} _{3}+\Sigma ^{8k}_2 &=& 0, \\
\Sigma ^{8k+2} _{3} &=&  \Sigma ^{8k+1} _{3}+\Sigma ^{8k+1}_2 &=& 0, \\
\Sigma ^{8k+3} _{3} &=&  \Sigma ^{8k+2} _{3}+\Sigma ^{8k+2}_2 &=& u_{4k}=u_k, \\
\Sigma ^{8k+4} _{3} &=&  \Sigma ^{8k+3} _{3}+\Sigma ^{8k+3}_2 &=& 2u_{4k}=2u_k, \\
\Sigma ^{8k+5} _{3} &=&  \Sigma ^{8k+4} _{3}+\Sigma ^{8k+4}_2 &=& 2u_{4k}=2u_k, \\
\Sigma ^{8k+6} _{3} &=&  \Sigma ^{8k+5} _{3}+\Sigma ^{8k+5}_2 &=& 2u_{4k}=2u_k, \\
\Sigma ^{8k+7} _{3} &=&  \Sigma ^{8k+6} _{3}+\Sigma ^{8k+6}_2 &=& u_k, \\
\end{array}
\right.
\Longleftrightarrow
\left\{
\begin{array}{cllllll}
x ^{8k} _{3} &=&  0,\\
x^{8k+1} _{3} &=&0, \\
x ^{8k+2} _{3} &=& 0,\\
x^{8k+3} _{3} &=&u_{k}/2, \\
x ^{8k+4} _{3} &=&  u_k,\\
x^{8k+5} _{3} &=&u_k, \\
x ^{8k+6} _{3} &=& u_{k},\\
x^{8k+7} _{3} &=&u_{k}/2. \\
 \end{array}
\right.
\]
\[
\left\{
\begin{array}{clllll}
\Sigma ^{16k} _{4} &=&   0, &&\\
\Sigma ^{16k+1} _{4} &=&  \Sigma ^{16k} _{4}+\Sigma ^{16k}_3 &=& 0, \\
\Sigma ^{16k+2} _{4} &=&  \Sigma ^{16k+1} _{4}+\Sigma ^{16k+1}_3 &=& 0, \\
\Sigma ^{16k+3} _{4} &=&  \Sigma ^{16k+2} _{4}+\Sigma ^{16k+2}_3 &=& 0, \\
\Sigma ^{16k+4} _{4} &=&  \Sigma ^{16k+3} _{4}+\Sigma ^{16k+3}_3 &=&u_k, \\
\Sigma ^{16k+5} _{4} &=&  \Sigma ^{16k+4} _{4}+\Sigma ^{16k+4}_3  &=&3u_k, \\
\Sigma ^{16k+6} _{4} &=&  \Sigma ^{16k+5} _{4}+\Sigma ^{16k+5}_3 &=& 5u_k, \\
\Sigma ^{16k+7} _{4} &=&  \Sigma ^{16k+6} _{4}+\Sigma ^{16k+6}_3 &=& 7u_k, \\
\Sigma ^{16k+8} _{4} &=&  \Sigma ^{16k+7} _{4}+\Sigma ^{16k+7}_3 &=& 8u_k, \\
\Sigma ^{16k+9} _{4} &=&  \Sigma  ^{16k+8} _{4}+\Sigma ^{16k+8}_3 &=& 8u_k, \\
\Sigma ^{16k+10} _{4} &=&  \Sigma ^{16k+9} _{4}+\Sigma ^{16k+9}_3 &=& 8u_k, \\
\Sigma ^{16k+11} _{4} &=&  \Sigma ^{16k+10} _{4}+\Sigma ^{16k+10}_3 &=& 8u_k, \\
\Sigma ^{16k+12} _{4} &=&  \Sigma ^{16k+11} _{4}+\Sigma ^{16k+11}_3 &=& 7u_k, \\
\Sigma ^{16k+13} _{4} &=&  \Sigma ^{16k+12} _{4}+\Sigma ^{16k+12}_3 &=& 5u_k, \\
\Sigma ^{16k+14} _{4} &=&  \Sigma ^{16k+13} _{4}+\Sigma ^{16k+13}_3 &=& 3u_k, \\
\Sigma ^{16k+15} _{4} &=&  \Sigma ^{16k+14} _{4}+\Sigma ^{16k+14}_3 &=& u_k, \\
\end{array}
\right.
\Longleftrightarrow
\left\{
\begin{array}{clll}
x ^{16k} _{4} &=&   0, \\
x ^{16k+1} _{4} &=&  0, \\
x ^{16k+2} _{4} &=&  0, \\
x ^{16k+3} _{4} &=&   0, \\
x ^{16k+4} _{4} &=&  u_k/8, \\
x ^{16k+5} _{4} &=& 3u_k/8, \\
x ^{16k+6} _{4} &=& 5u_k/8, \\
x ^{16k+7} _{4} &=& 7u_k/8, \\
x ^{16k+8} _{4} &=&  u_k, \\
x ^{16k+9} _{4} &=&  u_k, \\
x ^{16k+10} _{4} &=&  u_k, \\
x ^{16k+11} _{4} &=&  u_k, \\
x ^{16k+12} _{4} &=&  7u_k/8, \\
x ^{16k+13} _{4} &=&  5u_k/8, \\
x ^{16k+14} _{4} &=&   3u_k/8, \\
x ^{16k+15} _{4} &=&  u_k/8. \\
\end{array}
\right.
\]

\section{First combinatorial results}

\begin{lemma} \label{le:lle}
For any integers $n\geq 1$,  $k\geq 0$ and  $l\in\{0,\dots,2^n-1\}$, there exists $a(n,l)$, which does not depend on  $k$, such that $\Sigma ^{2^n k+l}_n = a(n,l)u_k$. In particular,  $\Sigma ^{2^n k}_n = a(n,0)=0$. For any integer  $n\geq1$ and $l\in\{0,\dots,2^{n}-1\}$, the coefficients $a(n, l)$ satisfy the following relation:
\begin{equation} \label{eq:1}
\begin{array}{clcl} &a(n+1,l+1) &=& a(n+1,l)+a(n,l) \\ \mbox{ and} &a(n+1,l+2^{n}+1) &= &a(n+1,l+2^n)-a(n,l). \end{array}
\end{equation}
We conclude that $a(n+1,l+2^n) = a(n+1,2^n)-a(n+1,l)$.
\end{lemma}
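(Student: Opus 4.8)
The plan is to recast the double recurrence (\ref{eq:te}) as an iterated partial-sum operation, then run a single induction on $n$ that at once produces the coefficients $a(n,l)$, the normalization $a(n,0)=0$, and the two recurrences (\ref{eq:1}); the closing identity will then fall out of a one-line telescoping. The engine is the following reformulation: writing (\ref{eq:te}) as $\Sigma^{j}_{n+1}-\Sigma^{j-1}_{n+1}=\Sigma^{j-1}_{n}$ and summing from $1$ to $J$ against the boundary value $\Sigma^0_{n+1}=0$ coming from (\ref{eq:tes}), I obtain $\Sigma^{J}_{n+1}=\sum_{i=0}^{J-1}\Sigma^{i}_{n}$, so that each row is the sequence of partial sums of the previous one.

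I would take as induction hypothesis at level $n$ that $\Sigma^{2^{n}k+l}_{n}=a(n,l)u_k$ for all $k\ge 0$ and $l\in\{0,\dots,2^{n}-1\}$, with $a(n,0)=0$; the base case $n=1$ is the computation $\Sigma^{2k}_1=0$, $\Sigma^{2k+1}_1=u_k$ of Section \ref{se:cal}, giving $a(1,0)=0$ and $a(1,1)=1$. For the step I apply the partial-sum identity at $J=2^{n+1}k+m$ and split the range into complete blocks of length $2^{n}$ plus the current partial block. On the block starting at $2^{n}(2j)$ the hypothesis gives $\Sigma^{2^{n}(2j)+l}_n=a(n,l)u_{2j}=a(n,l)u_k$ when reduced, and on the block starting at $2^{n}(2j{+}1)$ it gives $a(n,l)u_{2j+1}=-a(n,l)u_k$, using $u_{2k}=u_k=-u_{2k+1}$. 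Each adjacent pair of complete blocks therefore contributes $\big(\sum_l a(n,l)\big)(u_{2j}+u_{2j+1})=\big(\sum_l a(n,l)\big)(u_j-u_j)=0$, so all complete blocks cancel and $\Sigma^{2^{n+1}k+m}_{n+1}=\sum_{j=0}^{m-1}\Sigma^{2^{n+1}k+j}_{n}$. The right-hand side is $u_k$ times a sum of signed $a(n,\cdot)$'s that no longer depends on $k$; defining $a(n+1,m)$ to be that $k$-free sum proves the existence claim at level $n+1$, and the case $m=0$ (empty sum) gives $a(n+1,0)=0$. Reading off the single-term increments recovers (\ref{eq:1}): the $+$ sign on the first half-block gives $a(n+1,l+1)-a(n+1,l)=a(n,l)$, and the $-$ sign on the second gives $a(n+1,l+2^{n}+1)-a(n+1,l+2^{n})=-a(n,l)$.

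The step I expect to be the crux is precisely the block cancellation $u_{2j}+u_{2j+1}=0$: it is what forces the complete blocks to vanish and hence what makes the whole row-pattern $k$-periodic with bounded coefficients. This is the special feature of the Thue–Morse sequence alluded to in Section \ref{se:introcomb}, and it is exactly where the Sturmian examples there fail; everything surrounding it is bookkeeping with the partial-sum identity.

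Finally, for the concluding relation I would set $g(l)=a(n+1,l+2^{n})+a(n+1,l)$ and compute its discrete derivative from the two recurrences just established: $g(l+1)-g(l)=\big(a(n+1,l+2^{n}+1)-a(n+1,l+2^{n})\big)+\big(a(n+1,l+1)-a(n+1,l)\big)=-a(n,l)+a(n,l)=0$. Hence $g$ is constant on $\{0,\dots,2^{n}\}$, equal to $g(0)=a(n+1,2^{n})+a(n+1,0)=a(n+1,2^{n})$, which is exactly the asserted identity $a(n+1,l+2^{n})=a(n+1,2^{n})-a(n+1,l)$.
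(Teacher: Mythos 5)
Your proof is correct and follows essentially the same route as the paper's: both rest on the partial-sum identity $\Sigma^{J}_{n+1}=\sum_{i=0}^{J-1}\Sigma^{i}_{n}$ combined with the Thue--Morse cancellation $u_{2j}+u_{2j+1}=0$ to annihilate the complete blocks, and both obtain the closing identity by telescoping the two recurrences of (\ref{eq:1}). The only difference is organizational: you define $a(n+1,m)$ directly as the signed partial sum over the residual block and read off (\ref{eq:1}) from its increments, whereas the paper first establishes the case $l=0$ and then runs a separate single-step induction on $l$; the mathematics is identical.
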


\begin{proof}
We have seen in Section \ref{se:cal}, that this result is true for the first values  of the integer $n$. We suppose that the result is true up to a rank $n-1$ and we will show that it is still  true up to order $n$. We start by verifying that  $\Sigma_{n+1}^{2^{n}k}$ is zero for each integer $ k $:
\[
\begin{array}{clll}
\Sigma_{n}^{2^{n}k}&=& \sum \limits_{l=0}^{2^{n}k-1} \Sigma_{n-1} ^{l} + \Sigma_{n+1} ^{0} =  \sum \limits_{j=0}^{k-1} \sum \limits_{l=0}^{2^n-1} \ \Sigma_{n-1} ^{2^nj+l}  , \\
                                    & =&  \sum \limits_{j=0}^{k-1} \Big{(}  \sum \limits_{l=0}^{2^{n-1}-1}  \Sigma_{n-1} ^{2^{n-1}(2j)+l} + \sum \limits_{l=0}^{2^{n-1}-1} \Sigma_{n-1} ^{2^{n-1}(2j+1)+l}  \Big{)} , \\
                                    & = & \sum \limits_{j=0}^{k-1} \Big{(} u_{2j}  \sum \limits_{l=0}^{2^{n-1}-1} a(n-1,l)+ u_{2j+1}  \sum \limits_{l=0}^{2^{n-1}-1} a(n-1,l) \Big{)} , \\
                                    &=&   \Big{(}   \sum \limits_{l=0}^{2^{n-1}-1} a(n-1,l) \Big{)} \cdot   \Big{(} \sum \limits_{j=0}^{k-1} u_{2j}+u_{2j+1} \Big{)} ,  \\
                                    & =& 0.
\end{array}
\]

Now, we focus on the recurrence relations verified by the coefficients $a(n,k)$. The integer $n$ is already fixed, we show this result by induction on $l$ and $k$. For $ l = $ 0, we have seen that this result was true for all integers $ k $. Suppose Equation (\ref{eq:1}) holds for all $ k $ up to a rank $ l $ and show that it is still true for all $ k $ the rank $l+1$.
\[
\begin{array}{clll}
\Sigma_{n}^{2^{n}k+l+1} &=& \Sigma_{n}^{2^{n}k+l}+\Sigma_{n-1}^{2^{n-1}(2k)+l} = a(n,l)u_{k}+a(n-1,l)u_{2k} , \\
                                              &=&  a(n,l)u_{k}+a(n-1,l)u_{k}  = \Big{(}a(n,l)+a(n-1,l)\Big{)}u_{k} .\\
\end{array}
\]
\[
\begin{array}{clll}                                             
\Sigma_{n}^{2^{n}k+2^{n-1}+l+1} &=& \Sigma_{n}^{2^{n}k+2^{n-1}+l}+\Sigma_{n-1}^{2^{n-1}(2k+1)+l} ,  \\
                                                            &=& a(n,l+2^{n-1})u_{k}+a(n-1,l)u_{2k+1} ,  \\
                                                            &= & a(n,l)u_{k}-a(n-1,l)u_{k} , \\
                                                            &=& \Big{(}a(n+2^{n-1},l)-a(n-1,l)\Big{)}u_{k}. \\
\end{array}
\]
Then, we verify the last relation of the lemma:
\[
\begin{array}{clll}
a(n+1,l+2^n) &=& a(n+1,l+2^n-1)-a(n,l-1), \\
                        &=& a(n+1,l+2^n-2)-a(n,l-2)-a(n,l-1), \\
                        &=& a(n+1,2^n )- \sum _{j=0} ^{l-1} a(n,j).
\end{array}
\]
We get $a(n+1,l+2^n)=a(n+1,2^n)-a(n+1,l) $. 
\end{proof}

\begin{lemma}
For any integer $n$, $a(n,2^{n-1}) =2^{(n-1)(n-2)/2}$.
\end{lemma}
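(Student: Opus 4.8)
The plan is to reduce the closed form to a single one-step recurrence for the diagonal quantity $c_n := a(n,2^{n-1})$ and then solve that recurrence. Since $(n-1)(n-2)/2$ is the triangular number $0+1+\cdots+(n-2)$, the target identity $c_n = 2^{(n-1)(n-2)/2}$ is equivalent to the multiplicative relation $c_{n+1}=2^{\,n-1}c_n$ together with the base value $c_1=a(1,1)=1$, which was computed in Section \ref{se:cal}. So the whole statement follows once I establish
\[
a(n+1,2^n) \;=\; 2^{\,n-1}\,a(n,2^{n-1}) \qquad (n\geq 1).
\]

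To obtain this recurrence I would first turn the additive relation $a(n+1,l+1)=a(n+1,l)+a(n,l)$ from Lemma \ref{le:lle} into a telescoping sum: using $a(n+1,0)=0$ and summing over $l=0,\dots,2^n-1$ gives
\[
a(n+1,2^n)=\sum_{l=0}^{2^n-1}a(n,l).
\]
It then remains to evaluate this total, and here I would invoke the reflection symmetry $a(n,l+2^{n-1})=a(n,2^{n-1})-a(n,l)$, valid for $l\in\{0,\dots,2^{n-1}-1\}$ (the last assertion of Lemma \ref{le:lle} with $n$ in place of $n+1$). Splitting the sum at $2^{n-1}$ and reindexing the upper half by $l\mapsto l+2^{n-1}$, the symmetry replaces each upper term by $a(n,2^{n-1})-a(n,l)$; the two copies of $\sum_{l=0}^{2^{n-1}-1}a(n,l)$ then cancel and exactly $2^{n-1}$ copies of $a(n,2^{n-1})$ survive, so that $\sum_{l=0}^{2^n-1}a(n,l)=2^{\,n-1}a(n,2^{n-1})$. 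Combining this with the telescoped identity yields $c_{n+1}=2^{\,n-1}c_n$.

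Finally, unwinding the recurrence from the base case produces $c_n=\prod_{m=1}^{n-1}2^{\,m-1}=2^{\sum_{m=0}^{n-2}m}=2^{(n-1)(n-2)/2}$, which is the claim; equivalently one may package this last step as a short induction on $n$. The only delicate point, and the step I would check most carefully, is the bookkeeping on the ranges of $l$: I must be sure the additive recurrence is available for all $l$ up to $2^n-1$ and that the symmetry relation covers the endpoint $l=2^{n-1}-1$, so that the two half-sums telescope and cancel cleanly without any index overflow. All of these facts are guaranteed by Lemma \ref{le:lle}, so no genuinely new estimate is required; the substance of the argument is simply the exact cancellation produced by pairing the telescoping sum with the reflection symmetry.
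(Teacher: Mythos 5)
Your argument is correct and is essentially the paper's own proof: both establish $a(n+1,2^n)=\sum_{l=0}^{2^n-1}a(n,l)$ by telescoping the additive recurrence, split the sum at $2^{n-1}$, apply the reflection identity $a(n,l+2^{n-1})=a(n,2^{n-1})-a(n,l)$ to cancel the two half-sums, and conclude $a(n+1,2^n)=2^{n-1}a(n,2^{n-1})$ before unwinding from $a(1,1)=1$. No substantive difference.
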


\begin{proof}
Since $a(1,1)=1$, this result is immediate by induction from the relation:
\[
\begin{array}{clll}
a(n+1,2^n)&=&\sum \limits_{l=0}^{2^n-1}  a(n,l) = \sum  \limits _{l=0}^{2^{n-1}-1}  a(n,l) +\sum  \limits _{l=0}^{2^{n-1}-1}  a(n,l+2^{n-1} ) ,\\
                    &=& \sum  \limits _{l=0}^{2^{n-1}-1}  a(n,l) +\sum \limits  _{l=0}^{2^{n-1}-1} \Big{(} a(n,2^{n-1})-a(n,l ) \Big{)} ,\\
                    &=& \sum _{l=0}^{2^{n-1}-1} a(n,2^{n-1}) = 2^{n-1}a(n,2^{n-1}).\\
\end{array}
\]
So, $a(n+1,2^n) =  2^{n-1} \cdot 2^{(n-1)(n-2)/2} = 2^{(n+1-1)(n+1-2)/2}$.
\end{proof}

\begin{lemma} \label{le:pla}
For every integer $n$, and $l\in\{0,\dots,2^{n}-1\}$, 
\begin{equation} \label{eq:2} 0 \leq a(n,l)\leq 2^{(n-1)(n-2)/2}. \end{equation}
\end{lemma}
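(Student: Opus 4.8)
The plan is to prove the two-sided bound by induction on $n$, exploiting the two structural facts already in hand from Lemma~\ref{le:lle}: the first-difference recurrence $a(n+1,l+1)=a(n+1,l)+a(n,l)$, and the midpoint symmetry $a(n+1,l+2^n)=a(n+1,2^n)-a(n+1,l)$. The guiding observation, already visible in the explicit tables of Section~\ref{se:cal}, is that each row $a(n,\cdot)$ rises from $0$ to a maximum at the centre $l=2^{n-1}$ and then descends symmetrically. Since the preceding lemma identifies that central value as exactly $a(n,2^{n-1})=2^{(n-1)(n-2)/2}$, the upper bound asserted in \eqref{eq:2} is nothing more than the statement that the maximum of the row is attained at the midpoint.

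First I would settle the base case $n=1$ directly from $a(1,0)=0$ and $a(1,1)=1$. For the inductive step I assume $0\le a(n,l)\le 2^{(n-1)(n-2)/2}$ for all $l\in\{0,\dots,2^n-1\}$; crucially, only the lower half of this hypothesis, i.e. nonnegativity of the whole row $n$, will be needed to control row $n+1$. The core step treats the first half of the new row. Writing the recurrence as $a(n+1,l+1)-a(n+1,l)=a(n,l)$ for $l\in\{0,\dots,2^n-1\}$, the induction hypothesis makes every increment nonnegative, so $l\mapsto a(n+1,l)$ is nondecreasing on $\{0,\dots,2^n\}$. Starting from $a(n+1,0)=0$ and ending at $a(n+1,2^n)=2^{n(n-1)/2}$, the value supplied by the previous lemma, this already yields $0\le a(n+1,l)\le a(n+1,2^n)$ for every $l$ in the first half.

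Finally I would transfer the estimate to the second half using the symmetry relation: for $l\in\{0,\dots,2^n-1\}$ we have $a(n+1,l+2^n)=a(n+1,2^n)-a(n+1,l)$, and since $a(n+1,l)$ lies in $[0,a(n+1,2^n)]$ so does its reflection. Hence $0\le a(n+1,l)\le 2^{n(n-1)/2}=2^{((n+1)-1)((n+1)-2)/2}$ across the entire row $\{0,\dots,2^{n+1}-1\}$, which closes the induction. I do not expect a genuine obstacle here; the only point requiring care is the bookkeeping of index ranges, so that the first-difference recurrence and the symmetry relation are each invoked precisely where Lemma~\ref{le:lle} licenses them. The one conceptual shortcut worth flagging is that the upper bound should not be attacked pointwise: it falls out for free once monotonicity on the first half is combined with the reflection, the maximum being located at the midpoint whose value is pinned down by the preceding lemma.
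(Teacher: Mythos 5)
Your proposal is correct and is essentially the paper's own argument: nonnegativity of row $n$ makes the first half of row $n+1$ nondecreasing from $0$ up to the midpoint value $a(n+1,2^n)=2^{n(n-1)/2}$ supplied by the preceding lemma, and the reflection identity $a(n+1,l+2^n)=a(n+1,2^n)-a(n+1,l)$ transfers the bound to the second half. If anything, your bookkeeping is slightly cleaner than the paper's, which writes the midpoint value as $2^{(n-1)(n-2)/2}$ in the inductive step where $2^{n(n-1)/2}$ is meant.
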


\begin{proof}
We will show this by induction on the integer $ n $. We initialized the recurrence.  We suppose that the result is true up to the rank $ n $ and show that it is still true to the rank $ n +1 $. 
\bigskip

Suppose then that for each integer $l\in\{0,\dots,2^{n}-1\}$, Equation (\ref{eq:2}) holds. Since for every $l\in \{0,\dots,2^{n}-1\}$, 
\[
a(n+1,l+1)=a(n,l)+a(n+1,l)\geq 0,
\] 
the sequence $\Big{(} a(n+1,l) \Big{)}_{l\in \{0,\dots,2^{n}\}}$ increases from $0$ to $2^{(n-1)(n-2)/2}$ for $l=2^n$. We can then conclude because if $l\in \{0,\dots,2^{n}-1\}$,
\[
0\leq a(n+1,l+2^n) = 2^{(n-1)(n-2)/2}-a(n+1,l)\leq 2^{(n-1)(n-2)/2}. \qedhere
\]
\end{proof}

\begin{lemma}
For every integer $n$, and $l\in\{0,\dots,2^{n-2}-1\}$, 
\[
a(n,2l+1)\geq a(n,2l)\geq 2^{n-2} a(n,l).
\]
\end{lemma}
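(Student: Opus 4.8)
The plan is to treat the two inequalities separately, both resting on the summation form of Equation~(\ref{eq:1}). Written one level lower, its first relation reads $a(n,l+1)=a(n,l)+a(n-1,l)$, so from $a(n,0)=0$ one gets
\[
a(n,l)=\sum_{j=0}^{l-1}a(n-1,j),\qquad 0\le l\le 2^{n-1}.
\]
Moreover the proof of Lemma~\ref{le:pla} shows that $\big(a(n-1,j)\big)_{j}$ is nonnegative and nondecreasing on $\{0,\dots,2^{n-2}\}$, with maximal value $a(n-1,2^{n-2})=2^{(n-2)(n-3)/2}$, as shown above. These are the only ingredients I expect to need.

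The first inequality holds for every admissible $l$ and needs nothing new: the recursion gives
\[
a(n,2l+1)=a(n,2l)+a(n-1,2l)\ge a(n,2l),
\]
because $a(n-1,2l)\ge 0$ by Lemma~\ref{le:pla}, the index $2l\le 2^{n-1}-2$ staying in range. All the content is therefore in the second inequality, which I would establish by induction on $n$. The small cases are vacuous, since $a(n,l)=0$ for $l<n$ and the window $\{0,\dots,2^{n-2}-1\}$ contains no index carrying a positive value until $n$ is large enough.

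For the inductive step I would feed the summation formula into itself. Splitting $a(n,2l)=\sum_{j=0}^{2l-1}a(n-1,j)$ according to the parity of $j$ and using the first relation of Equation~(\ref{eq:1}) in the form $a(n-1,2i+1)=a(n-1,2i)+a(n-2,2i)$ gives, as long as the indices stay below the midpoint,
\[
a(n,2l)=2\sum_{i=0}^{l-1}a(n-1,2i)+\sum_{i=0}^{l-1}a(n-2,2i).
\]
Dropping the second, nonnegative sum and applying the level-$(n-1)$ instance of the lemma, $a(n-1,2i)\ge 2^{n-3}a(n-1,i)$, yields
\[
a(n,2l)\ \ge\ 2\sum_{i=0}^{l-1}2^{n-3}a(n-1,i)\ =\ 2^{n-2}\sum_{i=0}^{l-1}a(n-1,i)\ =\ 2^{n-2}a(n,l),
\]
which is the bound wanted. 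This is clean precisely while every index remains in the increasing half, that is, for $l\le 2^{n-3}$, where the induction hypothesis applies term by term.

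The hard part will be the remaining window $2^{n-3}\le l\le 2^{n-2}-1$. There the even indices $2i$ cross the midpoint $2^{n-2}$ of level $n-1$, the governing recursion switches to the second relation of Equation~(\ref{eq:1}) with its minus sign, the sequence $\big(a(n-1,2i)\big)_i$ turns decreasing, and the induction hypothesis runs past its range $i\le 2^{n-3}-1$. I would control this tail by folding it with the symmetry $a(n-1,m+2^{n-2})=a(n-1,2^{n-2})-a(n-1,m)$ and estimating against the peak value $2^{(n-2)(n-3)/2}$, anchoring the computation on the explicit tables of Section~\ref{se:cal}. It is exactly here, where the monotone part of $\big(a(n-1,j)\big)_j$ leaves the least slack, that the constant $2^{n-2}$ is most delicate to preserve, and I expect this boundary regime to be the crux of the argument.
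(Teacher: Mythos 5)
Your first inequality is fine, and your pairing identity $a(n,2l)=2\sum_{i=0}^{l-1}a(n-1,2i)+\sum_{i=0}^{l-1}a(n-2,2i)$ is correct algebra for $l\le 2^{n-3}$. But the proposal has two genuine gaps, one structural and one fatal. Structurally, your induction does not close even in the regime you claim: to handle level $n$ for $l\le 2^{n-3}$ you apply the hypothesis $a(n-1,2i)\ge 2^{n-3}a(n-1,i)$ for all $i\le l-1$, i.e.\ on the \emph{entire} range $\{0,\dots,2^{n-3}-1\}$ of the level-$(n-1)$ statement, while your argument only ever re-establishes the lower half of each level; so the window $2^{n-3}\le l\le 2^{n-2}-1$ that you defer is not a boundary refinement but a prerequisite of your own inductive step. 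Fatally, the same-level inequality you are chasing is false, so no folding estimate can rescue it: from the tables of Section \ref{se:cal} and Equation (\ref{eq:1}) one computes $a(5,7)=9$ but $a(5,14)=60<72=2^{3}a(5,7)$, and even inside your ``clean'' regime $a(7,16)=1008$ while $a(7,32)=13312<32256=2^{5}a(7,16)$. What is true --- and what the paper's displayed chains actually prove, since they end with $a(n+1,2(l+1))\ge 2^{n-1}a(n,l+1)$, and what the proof of Point 6 of Lemma \ref{le:3} later uses --- is the cross-level bound $a(n+1,2l+1)\ge a(n+1,2l)\ge 2^{n-1}a(n,l)$; the lemma as printed evidently drops a level on the right-hand side. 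Notice that your own computation, run with the corrected hypothesis $a(n-1,2i)\ge 2^{n-3}a(n-2,i)$, yields exactly $a(n,2l)\ge 2^{n-2}a(n-1,l)$ on the lower quarter, so the pairing idea is sound for the corrected statement there, but not for the printed one.

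The paper also gets past the midpoint by a mechanism quite different from peak-value estimates, which is where your sketch stalls. It runs a double induction: outer on $n$, inner on $l$ in steps of two, using $a(n+1,2l+2)=a(n+1,2l)+a(n,2l)+a(n,2l+1)$ together with the inner hypothesis $a(n+1,2l)\ge 2^{n-1}a(n,l)$, and a case split at $l=2^{n-2}$: below the midpoint $a(n,2l)\le a(n,2l+1)$, and above it the symmetry $a(n,l+2^{n-1})=a(n,2^{n-1})-a(n,l)$ reverses the monotonicity, so in either case the pair is at least twice its minimum and $a(n,2l)+a(n,2l+1)\ge 2\cdot 2^{n-2}a(n-1,l)$; the step then closes via $a(n,l)+a(n-1,l)=a(n,l+1)$ from Equation (\ref{eq:1}). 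To salvage your route you should target the cross-level statement on its full range and use this pair-splitting (or the paper's inner induction) for the upper window; as written, your proposal establishes neither half of the printed claim.
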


\begin{proof}
We prove this lemma by induction on $n$. For $n = 1$, the result is immediate. We show that if the result is true up to the rank $n$, it is still true to the rank $ n+1$. We show this by induction on $ l $. From Lemma  \ref{le:lle}, this is true for $ l = 0$ and $l=1$.  We suppose that the result is true for $2l$ and $2l+1$, and we show that it is still true for $2l+2$ and $2l+3$.
\bigskip

\noindent
\underline{If $l\in\{0,\dots,2^{n-2}-1,\}$}, then $a(n,2l)\leq a(n,2l+1)$ and
\[
\begin{array}{clll}
a(n+1,2(l+1)+1)&\geq &a(n+1,2(l+1)) \\
                            &\geq& a(n+1,2l)+a(n,2l)+a(n,2l+1) \\
                            &\geq& 2^{n-1}a(n,l)+a(n,2l)+a(n,2l+1) \\
                            &\geq& 2^{n-1}a(n,l)+2a(n,2l) \\
                            &\geq& 2^{n-1}a(n,l)+2 2^{n-2}a(n-1,l) \\
                            &\geq&  2^{n-1}\Big{(} a(n,l)+a(n-1,l) \Big{)}  \\
                            &\geq &2^{n-1} a(n,l+1) .\\
\end{array}
\]
\underline{If $l\in\{2^{n-2},\dots,2^{n-1}-1,\}$}, then $a(n,2l)\geq a(n,2l+1)$ and
\[
\begin{array}{clll}
a(n+1,2(l+1)+1) &\geq& a(n+1,2(l+1)) \\
                            &\geq& a(n+1,2l)+a(n,2l)+a(n,2l+1) \\
                            &\geq& 2^{n-1}a(n,l)+a(n,2l)+a(n,2l+1) \\
                            &\geq& 2^{n-1}a(n,l)+2a(n,2l+1)\\
                            &\geq& 2^{n-1}a(n,l)+2 2^{n-2}a(n-1,l) \\
                            &\geq&  2^{n-1}\Big{(} a(n,l)+a(n-1,l) \Big{)}  \\
                            &\geq &2^{n-1} a(n,l+1) .\\
\end{array}
\]
\end{proof}

\section{Proof of Theorem \ref{th}}

Let us start by proving the following lemma.

\begin{lemma}
\label{le:3}
Let $n$ be an integer greater than or equal to $1$.
\begin{enumerate}
\item For each integer $m$, $f_n(2m+1)=u_m$ and $f_n(2m)=0$.
\item For each real $x$, $f_n(x)\in[-1,1]$.
\item For each integer $m$ and for each  $x\in[0,2]$,  
\begin{equation} \label{eq:3} f_n(x+2m)=-f_n(x)u_m. \end{equation}
\item For each integer $m$, if $u_m=-1$, $f_n$ increases on $[m,m+1]$, and if $u_m=1$, $f_n$ decreases on $[m,m+1]$. In particular, $f_n$ and $u_m$ have the same sign on $[2m,2m+2]$.
\item For each couple of reals $(x,y)\in[0,2]^2$: $|f_n(x)-f_n(y)| \leq | x-y|$.
\item For each real $x\in[0,1]$, the sequence $ ( f_n(x))_n$ decreases.
\item For each real $x\in[2m,2m+1]$, $ ( f_n(x))_n$ is decreasing if $u_m=-1$, and increasing otherwise. And for each real $x\in[2m+1,2m+2]$, $ ( f_n(x))_n$ is increasing if $u_m=-1$, and decreasing otherwise
\end{enumerate}
\end{lemma}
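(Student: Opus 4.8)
The plan is to prove the seven assertions essentially in the order listed, since each one feeds into the next and almost every claim can be reduced to a statement about the combinatorial coefficients $a(n,l)$ established in Section 4. The foundational observation is that by the definition of $f_n$ on each dyadic subinterval of length $2^{1-n}$, the values of $f_n$ at the dyadic points $k/2^{n-1}$ are exactly $x^k_n=2^{-(n-1)(n-2)/2}\Sigma^k_n$, and $f_n$ interpolates linearly between consecutive such points. So every property of $f_n$ translates into a property of the $x^k_n$, hence of the $a(n,l)$ via Lemma \ref{le:lle}.

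First I would dispatch assertions (1) and (3) together, as they are pure evaluations. For (1), the points $2m+1$ and $2m$ correspond to $k=2^{n-1}(2m+1)$ and $k=2^{n}m$; using $\Sigma^{2^nm}_n=0$ and the value $a(n,2^{n-1})=2^{(n-1)(n-2)/2}$ from Lemma 2, together with the normalization, gives $f_n(2m)=0$ and $f_n(2m+1)=u_m$. For (3), I would write $x+2m$ with $x\in[0,2]$ and use the scaling relation $\Sigma^{2^nk+l}_n=a(n,l)u_k$ from Lemma \ref{le:lle}: the index $2m$ contributes a factor $u_m$, and the sign flip comes from the identity $u_{2\cdot\text{(odd)}}=-u$ encoded in the relations, so that $f_n(x+2m)=-f_n(x)u_m$. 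Assertion (2) is then a consequence of (3) together with the bound $0\le a(n,l)\le 2^{(n-1)(n-2)/2}$ of Lemma \ref{le:pla}, which after normalization says $0\le x^l_n\le 1$ on a single period, so $|f_n|\le1$ everywhere.

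Next, assertion (4) is a monotonicity statement on unit intervals, which I would read off from the sign of the increments $x^{k+1}_n-x^k_n$. By (\ref{eq:1}) these increments are governed by $a(n,l)$, which Lemma \ref{le:pla} shows increases from $0$ to its maximum on $\{0,\dots,2^{n-1}\}$ and then decreases back to $0$ by symmetry $a(n,l+2^n)=a(n,2^n)-a(n,l)$ (stated at the end of Lemma \ref{le:lle}); combined with the sign $u_m$ from (3) this yields the claimed increase/decrease pattern on $[m,m+1]$. For the Lipschitz estimate (5), I would use that on each dyadic piece $f_n$ is affine with slope $2^{n-1}(x^{k+1}_n-x^k_n)=2^{n-1}\cdot2^{-(n-1)(n-2)/2}(a(n,l+1)-a(n,l))u_k$ on the relevant interval, and bound this slope in absolute value by $1$. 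This is exactly where the last lemma of Section 4, the doubling inequality $a(n,2l+1)\ge a(n,2l)\ge2^{n-2}a(n,l)$, does its work: it controls how the slopes at level $n$ compare to those at level $n-1$ and keeps every slope at most $1$ in modulus, so the piecewise-linear $f_n$ is $1$-Lipschitz on $[0,2]$ and hence, by periodicity from (3), on all of $\mathbb{R}_{\ge0}$.

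The main obstacle I expect is assertion (6) (and its signed refinement (7)), the monotone convergence of the sequence $\big(f_n(x)\big)_n$, because this is where one must compare two different levels of the triangle at the same real point $x$ rather than within a fixed level. The strategy is to fix a dyadic point of level $n-1$, note that it is also a point of level $n$, and show $f_{n+1}(x)\le f_n(x)$ on $[0,1]$ by expressing $x^{2l}_{n+1}$ and $x^{2l+1}_{n+1}$ in terms of $x^l_n$ and comparing. Concretely the inequality $a(n,2l)\ge2^{n-2}a(n,l)$ converts, after dividing by the normalizing powers of two, into $x^{2l}_{n+1}\le x^l_n$, which is the decrease claimed in (6); the refinement (7) then follows by applying (3) to transport the $[0,1]$ statement to the interval $[2m,2m+1]$ or $[2m+1,2m+2]$, tracking the sign $u_m$ and the reflection that swaps increasing for decreasing. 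Once (5) and (6) are in hand, the uniform Lipschitz bound plus pointwise monotone convergence give uniform convergence of $f_n$ to a continuous limit $f_\infty$, and passing to the limit in the defining relations (1)–(4) will establish the properties of $f_\infty$ asserted in Theorem \ref{th}; verifying that $f_\infty$ actually solves the integral equation (\ref{eq:0}) is the remaining analytic step, but it reduces to checking the equation on dyadic points and invoking continuity.
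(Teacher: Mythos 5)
Your overall architecture --- reduce every assertion to the coefficients $a(n,l)$ via $f_n(k/2^{n-1})=x^k_n=2^{-(n-1)(n-2)/2}\Sigma^k_n$, prove the points in order, and transport statements from $[0,2]$ to $[2m,2m+2]$ through the periodicity relation (3) --- is the same as the paper's, and your treatment of points (1)--(4) and (7) is sound (for (4) the paper argues by induction on $n$ through the increment identity rather than through the unimodality of $a(n,\cdot)$ plus periodicity, but the two routes are equivalent).

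The genuine gap is in point (5), in two respects. First, the lemma you invoke, $a(n,2l+1)\ge a(n,2l)\ge 2^{n-2}a(n,l)$, is a \emph{lower} bound on even-indexed coefficients in terms of the previous level; it is the engine behind point (6) (it converts into $|x^{2l}_{n+1}|\ge |x^l_n|$, hence monotonicity of $(f_n(x))_n$ at dyadic points), and it gives no upper bound on the increments $a(n,l+1)-a(n,l)$. The identity relevant to (5) is $a(n,l+1)-a(n,l)=\pm a(n-1,l)$ from Equation (\ref{eq:1}), equivalently: the slope of $f_n$ on $\left[k/2^{n-1},(k+1)/2^{n-1}\right]$ equals $2\,f_{n-1}(k/2^{n-2})$, which is then bounded using $|f_{n-1}|\le 1$. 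Second, your plan to ``bound this slope in absolute value by $1$'' cannot succeed, because the slope genuinely attains $2$: from Section \ref{se:cal}, $x^{16k+4}_4=u_k/8$ and $x^{16k+5}_4=3u_k/8$, so $f_4$ has slope of modulus $2$ on $[1/2,5/8]$ and $|f_4(5/8)-f_4(1/2)|=1/4>1/8$. So the inequality of point (5) holds with constant $2$, not $1$ (the paper's own proof silently drops a factor of $2$ in the exponent at the step $2^{-(n-1)(n-2)/2}=2^{-(n-2)(n-3)/2}\,2^{-n+1}$); a uniform Lipschitz constant of $2$ is all that is needed for the limit function, but you should not expect the stated bound to come out of a slope computation.

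A smaller point on (6): comparing $x^{2l}_{n+1}$ with $x^l_n$ only controls $f_{n+1}-f_n$ at the level-$n$ dyadic points, whereas $f_{n+1}$ has additional breakpoints at odd multiples of $2^{-n}$; to conclude $f_{n+1}\le f_n$ on all of $[0,1]$ one must also compare $x^{2l+1}_{n+1}$ with the interpolated value $\tfrac12(x^l_n+x^{l+1}_n)$. The paper's proof shares this omission, but since you state (6) for every real $x$, the extra comparison belongs in your argument.
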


\begin{proof}[Proof of Point 1 of Lemma \ref{le:3}]
We fix an integer $n\geq1$, and an integer $m$. By the definition of functions $f_n$, $f_n(2m) = \Sigma_n^{2^{n}m}  2^{-(n-1)(n-2)/2}=0$ and
\[
f_n(2m) = \Sigma_n^{2^{n}m+2^{n-1}}  2^{-(n-1)(n-2)/2}=2^{(n-1)(n-2)/2}u_m2^{-(n-1)(n-2)/2}=u_m. \qedhere
\]
\end{proof}

\begin{proof}[Proof of Point 2 of Lemma \ref{le:3}]
From Lemma \ref{le:pla}, for each positive real $x$:
\[
\begin{array}{clll}
| f_n(x) | & \leq& \sup\left \{ |  x^{2^nk +l}_n | ;k\in \mathbb N\mbox{ and }l\in \{0,\dots,2^n\}  \right\} \\
               &\leq &\sup\left \{ |   \Sigma ^{2^nk +l}_n2^{-(n-1)(n-2)/2} | ;k\in \mathbb N\mbox{ and }l\in \{0,\dots,2^n\}  \right\} \\
               & \leq& \sup\left \{ a(n,l) 2^{-(n-1)(n-2)/2} ; l\in \{0,\dots,2^n\}  \right\}. \\
\end{array} 
\]
From Lemma \ref{le:pla}, $|a(n,l) 2^{-(n-1)(n-2)/2} |\leq 1$ and $|f_n(x)|\leq1$.
\end{proof}

\begin{proof}[Proof of Point 3 of Lemma \ref{le:3}]
We fix a real $x=\frac{k}{2^{n-1}}+\delta_x\in[0,2)$, and an integer $m$. 
\[
\renewcommand{\arraystretch}{1.4}
\begin{array}{clll}
f_n(x+2m)&=&f_n \left( \frac{k+m2^{n}}{2^{n-1}}+\delta_x \right) = x^{k+m2^n}_n+\delta_x(x^{k+m2^n+1}_n-x^{k+m2^n}_n)\\
&=&-u_m\Big{(}x^{k+m2^n}_n+\delta_x(x^{k+m2^n+1}_n-x^{k+m2^n}_n)\Big{)}=-u_m f_n(x).
\end{array}
\]

We treat now the case where $x = 2m$. From Point 1, $ f_n (2m) = f_n(0)=0 $ and this point is demonstrated.
\end{proof}

\begin{proof}[Proof of Point 4 of Lemma \ref{le:3}]
We verify this result by induction on $ n $. For $n= 1$, the result is true. Now, we verify that if  it is true up to the rank $n-1$, it will be still true to the rank $n$. For any integer $k$,
\[
\renewcommand{\arraystretch}{1.4} 
\begin{array}{clll}
 f_n\left( \frac{k}{2^{n-1}} \right) - f_n\left( \frac{k+1}{2^{n-1}}\right )  &=& x^{k}_n-x^{k+1}_n  = ( \Sigma^{k}_n-\Sigma^{k+1}_n ) 2^{-(n-1)(n-2)/2} ,\\
                                                                                                                      &=& \Sigma ^{k}_{n-1} 2^{-(n-1)(n-2)/2} =  \Sigma ^{k}_{n-1} 2^{-(n-1-1)(n-1-2)/2}2^{-n+1}, \\
                                                                                                                      & = & x^{k}_{n-1} 2^{-n+1} = f_{n-1}(\frac{k}{2^{n-2}}) 2^{-n+1}.
\end{array}
\]

So, if $k=m2^{n-1}+l$, with $0\leq l\leq 2^{n-1}-1$, then $\frac{k}{2^{n-2}} = \frac{l}{2^{n-2}}+2m$, and $f_{n-1}(\frac{k}{2^{n-2}})$ and $u_m$ have the same sign. Then,  $f_n$ is decreasing on $[m,m+1]$ if $u_m=-1$, and increasing otherwise.

And if $k=(m+1)2^{n-1}+l$, with $0\leq l\leq 2^{n-1}-1$, then $\frac{k}{2^{n-2}} = \frac{l}{2^{n-2}}+2(m+1)$, and $f_{n-1}(\frac{k}{2^{n-2}})$ have the same sign as  $u_{m+1}$, and so  $f_n$ is decreasing on $ [m, m +1]$ if $u_{m+1}=-1$, and increasing otherwise.

We suppose now that $u_m=u_{2m}=-1$. The function $f_n$ decreases from $0$ to $-1$ on $[2m,2m+1]$, and increases from $-1$ to $0$ on $[2m+1,2m+2]$. So, the function is negative on $[2m,2m+2]$.  We can then use the same argument if $u_m = 1$ to complete the proof of this point.
\end{proof}

\begin{proof}[Proof of Point 5 of Lemma \ref{le:3}]
We fix two reals $x$ and $y$, such that  $x\leq y$, verifying:
\[
x=\frac{k}{2^{n-1}}+\delta_x \mbox{ and } y=\frac{l}{2^{n-1}}+\delta_y,
\]
where $\delta_x$ and $\delta_y$ are less than $1/2^{n-1}$.
\[
\renewcommand{\arraystretch}{1.4}
\begin{array}{l}
f_n(x)-f_n(y) =  x^k_n +2^{n-1}\delta_x (x^{k+1}_n-x^k_n) -  x^l_n -2^{n-1}\delta_y (x^{l+1}_n-x^l_n),\\
\begin{array}{llll}
& = & x^k_n-x^{k+1}_n+\dots+x^{l-1}_n+x^{l}_n +2^{n-1}\delta_x (x^{k+1}_n-x^k_n) -  2^{n-1}\delta_y (x^{l+1}_n-x^l_n) ,\\
&=& \left( f_{n-1} \left( \frac{k}{2^{n-2}} \right) + \dots + f_{n-1}\left( \frac{l-1}{2^{n-1}} \right)  \right) 2^{1-n}-\delta_x  f_{n-1} \left( \frac{k}{2^{n-2}} \right) +\delta_y f_{n-1} \left( \frac{l}{2^{n-2}} \right) .
\end{array}
\end{array}
\]

Since $f_{n-1}$ is negative on $[0,1]$:
\[
| f_n(x)-f_n(y) | \leq \frac{l-k}{2^{n-1}}+ \delta_y-\delta_x = | x-y |. \qedhere
\]
\end{proof}

\begin{proof}[Proof of Point 6 of Lemma \ref{le:3}]
We show that for each integer $N$ and each integer $l\in\{0,\dots,2^{n-1}-1\}$, the sequence $\left( f_{n+N+1} \left( \frac{l}{2^{N-1}} \right)\right)_n$ is decreasing:
\[
\renewcommand{\arraystretch}{1.4}
\begin{array}{clll}
f_{n+N+1} (l/2^{N-1} )&=& f_{n+N+1} \left( \frac{l2^n}{2^{N+n-1}} \right) = f_{n+N+1} \left( \frac{l2^n-1}{2^{N+n-1}} \right) = x^{l2^{n}}_{N+n+1}, \\
                                       &=& \Sigma ^{l2^n} _{N+n+1} 2^{(N+n)(N+n-2)} =  a(N+n+1,l2^n)2^{(N+n)(N+n-2)} u_0, \\
                                       &=& - a(N+n+1,l2^n)2^{(N+n)(N+n-2)}, \\
                                       & \leq& -2^{N+n-1} a(N+n,l2^{n-1}) 2^{(N+n)(N+n-1)/2}.
\end{array}
\]
This result is then proved because 
\[
\begin{array}{c}
-2^{N+n-1} a(N+n,l2^{n-1}) 2^{(N+n)(N+n-1)/2} =  -a(N+n,l2^{n-1}) 2^{(N+n-1)(N+n-2)/2}, \\
\begin{array}{lcl} & =&  a(N+n,l2^{n-1}) 2^{(N+n-1)(N+n-2)/2} u_0, \\ & =& f_{n+N}(l/2^{N-1}).  \end{array}
\end{array}
\]
Then, $f_{N+n+1} (l/2^{N-1} )\leq  f_{N+n}(l/2^{N-1})$.
\end{proof}

\begin{proof}[Proof of Point 7 of Lemma \ref{le:3}]
Let $x\in [0,1]$ and $m$ be an integer. We deduce the proof of this point from the following remarks:
\[
f_n(x+1) =-1-f_n(x)  \mbox{ and } f_n(x+2m)=-u_m \cdot f_n(x). \qedhere
\]
\end{proof}

\begin{proof}[Proof of Theorem \ref{th}]
For each real number $x$, the sequence $(f_n(x))_n$  is monotone and bounded, so it converges. Let $f_\infty(x)$ denote the limit. It is  clear that the function $ f_ \infty $ is $1$-Lipschitz. The third point is proved from Equation (\ref{eq:3}). With the previous lemma, we can deduce that the range of the function $ f_\infty $ is included in $[-1,1] $ and that for each positive integer $m$: 
\[
 f_\infty (2m) = 0 \mbox{ and } f_\infty (2m +1) =  u_m.
\] 
 
We need to verify that it is a solution of  Equation (\ref{eq:0}). We fix a positive real $X\in[2m,2m+2[$ and an integer $n$  such that $[X,X+1/2^{n-1}]\subset]2m,2m+2[$. We fix $l$, the integer such that $0 \leq \delta_X = X-l/2^{n-1}\leq 1/2^{n-1}$.
\bigskip

Then for any integer $ m $ sufficiently large:
\[
\begin{array}{cllll}
f_{n+m+1}(X)=  &=& f_{n+m+1} \left( \frac{l2^{m}}{2^{n+m-1}} \right)+ f_{n+m+1}(X) -  f_{n+m+1} \left( \frac{l}{2^{n-1}} \right) , \\
                            &=& 2^{1-n-m} \sum \limits_{j=0} ^{l2^{m}-1} f_{n+m}  \left( \frac{j}{2^{n+m}} \right)  + f_{n+m+1}(X) -  f_{n+m+1} \left( \frac{l}{2^{n-1}} \right).\\
\end{array}
\]
 
Since for each real $x$, the sequence $(f_n (x)) _n$ is monotone, we let $m$ tend to infinity to find:
\[
f_\infty (X) =  \int _{0} ^{2\frac{l}{2^{n-1}} } f_\infty(x)dx + f_\infty(X) - f_\infty \left( \frac{l}{2^{n-1}} \right).
\]

We deduce therefore that 
\[
\left | f_\infty (X) -  \int _{0} ^{2X} f_\infty(x)dx \right| \leq 2\left |   f_\infty(X) - f_\infty \left( \frac{l}{2^{n-1}} \right) \right | 
\leq \frac{1}{2^{n-2}}.
\]

Then, when $n$ goes to infinity, $f_\infty (X) =  \int _{0} ^{2X} f_\infty(x)dx$.
\end{proof}


\begin{thebibliography}{9}
\bibitem{MR2040589}
    {B. Adamczewski},
    'R\'epartition des suites {$(n\alpha)_{n\in\mathbb N}$} et substitutions',
    \emph{Acta Arith.}, 112 (2004) 1, 1--22.
\bibitem{allouche95b}
    {\mbox{J.-P.} Allouche, A. Arnold, J. Berstel, S. Brlek, W. Jockusch, S. Plouffe and B.E. Sagan},
     'A relative of the {Thue-Morse} sequence',
   \emph{Discrete Math.}, 139 (1995) 455--46.    
\bibitem{Berstel92axelthue's}
    {J. Berstel},
    'Axel {Thue's work} on repetitions in words',
   \emph{Invited Lecture at the 4th Conference on Formal Power Series and Algebraic Combinatorics},  (1992) {65--80}.
\bibitem{MR0113864}
    {H. Kesten},
     'Uniform distribution {${\rm mod}\,1$}',
     \emph{Ann. of Math. (2)}, 71, (1960) 445--471.
\bibitem{N2}
     {A. de Luca and S. Varricchio},
     'Some combinatorial properties of the Thue-Morse sequence and a problem in semigroups',
     \emph{Theoretical Computer Science}, 63 (1989) 333--348.
 \bibitem{MR1743497}
     {C. G. Pinner},
     'On the one-sided boundedness of sums of fractional parts {$(\{n\alpha+\gamma\}-\frac 12)$}   ',
     \emph{Journal of Number Theory}, 81, (2000) 1, 170--204.
\bibitem{prunescu}
    {M. Prunescu},
     'The \mbox{T}hue-Morse-Pascal double sequence and similar structures',
      \emph{C. R. Acad. Sci.}, 349, (2011), 939--942.
\end{thebibliography}
\end{document}